\newtheorem{thm}{Theorem}[section]
\newtheorem{cor}[thm]{Corollary}
\newtheorem{lem}[thm]{Lemma}
\theoremstyle{definition}
\newtheorem{rem}[thm]{Remark}       
\newcommand{\mathsym}[1]{{}}
\numberwithin{equation}{section}
\begin{document}

\title[multivariate moment problem]{Application of localization to the multivariate moment problem II}

\author{Murray Marshall}

\address{Department of Mathematics and Statistics,
University of Saskatchewan, 
Saskatoon, SK S7N5E6, Canada}
\email{marshall@math.usask.ca}
\keywords{positive definite, moments, sums of squares, Carleman condition}
\subjclass[2000]{Primary 44A60 Secondary 14P99}
\thanks {This research was funded in part by an NSERC of Canada Discovery Grant. 
}

\begin{abstract} The paper is a sequel to the paper \cite{M3} by the same author. A new criterion is presented for a PSD linear map $L : \mathbb{R}[\underline{x}] \rightarrow \mathbb{R}$ to correspond to a positive Borel measure on $\mathbb{R}^n$. The criterion is stronger than Nussbaum's criterion in \cite{N} and is similar in nature to 
Schm\"udgen's criterion in \cite{M3} \cite{S}.
It is also explained how the criterion allows one to understand the support of the associated measure in terms of the non-negativity of $L$ on a quadratic module of $\mathbb{R}[\underline{x}]$. This latter result extends a result of Lasserre in \cite{L}. The techniques employed are the same localization techniques employed already in \cite{M1} and  \cite{M3}, specifically one works in the localization of $\mathbb{R}[\underline{x}]$ at $p = \prod_{i=1}^n(1+x_i^2)$ or $p' = \prod_{i=1}^{n-1}(1+x_i^2)$.
\end{abstract}

\maketitle


This paper is a sequel to the earlier paper \cite{M3}. We present a couple of interesting and illuminating results which were inadvertently overlooked when \cite{M3} was written; see Theorems \ref{new} and \ref{support} below. Theorem \ref{new} extends an old result of Nussbaum in \cite{N}. See Theorem \ref{md carleman} below for a statement of Nussbaum's result. The density condition ({\ref{n}) appearing in Theorem \ref{new} is weaker than the Carleman condition (\ref{v})
appearing in Nussbaum's result. Theorem \ref{support} shows how condition (\ref{n}) allows one to read off information about the support of the measure from the non-negativity of the linear functional on a quadratic module. This illustrates how natural condition (\ref{n}) is.  Theorem \ref{support} extends a result of Lasserre in \cite{L}.

We recall some terminology and notation from \cite{M1} and \cite{M3}. For an $\mathbb{R}$-algebra $A$ (commutative with 1), a \it quadratic module \rm of $A$ is a subset $M$ of $A$ such that $1\in M$, $M+M\subseteq M$ and $f^2M\subseteq M$ for all $f\in A$. $\sum A^2$ denotes the set of all (finite) sums of squares of $A$. $\sum A^2$ is the unique smallest quadratic module of $A$. A linear map $L : A \rightarrow \mathbb{R}$ is said to be PSD (positive semidefinite) if $L(f^2) \ge 0$ for all $f\in A$, equivalently, if $L(\sum A^2) \subseteq [0,\infty)$. Define $\mathbb{R}[\underline{x}] := \mathbb{R}[x_1,\dots,x_n]$, $\mathbb{C}[\underline{x}] := \mathbb{C}[x_1,\dots,x_n]$.  If $\mu$ is a positive Borel measure on $\mathbb{R}^n$ having finite moments, i.e., $\int \underline{x}^k d\mu$ is well-defined and finite for all monomials $\underline{x}^k := x_1^{k_1}\dots x_n^{k_n}$, $k_j\ge 0$, $j=1,\dots,n$, the PSD linear map $L_{\mu} : \mathbb{R}[\underline{x}] \rightarrow \mathbb{R}$ is defined by $L_{\mu}(f) = \int f d\mu$. If $\nu$ is another positive Borel measure on $\mathbb{R}^n$ having finite moments then we write $\mu \sim \nu$ is indicate that $\mu$ and $\nu$ have the same moments, i.e., $L_{\mu}=L_{\nu}$. We say $\mu$ is \it determinate \rm if $\mu \sim \nu$ $\Rightarrow$ $\mu = \nu$.

\begin{thm} \label{new} Suppose $L : \mathbb{R}[\underline{x}] \rightarrow \mathbb{R}$ is linear and PSD and, for $j=1,\dots,n-1$,
\begin{equation}\label{n}
\exists \text{ a sequence } \{q_{jk}\}_{k=1}^{\infty} \text{ in } \mathbb{C}[\underline{x}] \text{ such that } \lim\limits_{k\rightarrow\infty}L(|1-(1+x_j^2)q_{jk}\overline{q_{jk}}|^2)=0.
 \end{equation}
Then there exists a positive Borel measure $\mu$ on $\mathbb{R}^n$ such that $L=L_{\mu}$. If condition (\ref{n}) holds also for $j=n$ then the measure is determinate.
\end{thm}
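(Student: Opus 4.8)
The plan is to run a Gelfand--Naimark--Segal style construction and then reduce representability of $L$ to a joint spectral decomposition of the multiplication operators, with condition (\ref{n}) supplying exactly the self-adjointness that is needed. First I would extend $L$ to $\mathbb{C}[\underline{x}]$ by $\mathbb{R}$-linearity and introduce the semi-inner product $\langle f,g\rangle := L(f\overline{g})$; this is positive semidefinite because $|f|^2$ is a sum of squares in $\mathbb{R}[\underline{x}]$ and $L$ is PSD. Factoring out the null space $N=\{f:\langle f,f\rangle=0\}$ and completing yields a Hilbert space $H$ with a distinguished cyclic vector $\xi$ (the class of $1$), and on the dense domain $D$ (the image of $\mathbb{C}[\underline{x}]$) each coordinate $x_j$ acts as a symmetric operator $X_j$. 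The $X_j$ commute on $D$, and recovering $L$ amounts to producing a projection-valued measure $E$ on $\mathbb{R}^n$ with $X_j=\int x_j\,dE$ and then setting $\mu(\cdot)=\langle E(\cdot)\xi,\xi\rangle$.

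The heart of the argument, and the step I expect to be the main obstacle, is to show that condition (\ref{n}) forces $X_j$ to be \emph{essentially} self-adjoint for $j=1,\dots,n-1$. Here the localization of \cite{M1}, \cite{M3} enters: in $\mathbb{R}[\underline{x}]_{p'}$ the element $\tfrac{1}{1+x_j^2}$ is \emph{bounded}, so the natural target is a bounded self-adjoint operator $Y_j$ implementing multiplication by it, together with $Z_j$ implementing multiplication by $\tfrac{x_j}{1+x_j^2}$. Writing $S_j=I+X_j^2\ge I$, condition (\ref{n}) says precisely that $S_j|q_{jk}|^2\to \xi$ in $H$ with each $|q_{jk}|^2$ a square; since $\operatorname{ran}(S_j)$ is invariant under every multiplication operator and $\xi$ is cyclic, this places $\xi$ (and then, after controlling the unboundedness via the contraction $S_jf\mapsto f$, a dense set) in $\overline{\operatorname{ran}(S_j)}$, which is equivalent to essential self-adjointness of $X_j$. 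Equivalently, (\ref{n}) is exactly the condition for $L$ to extend to a PSD functional on $\mathbb{R}[\underline{x}]_{p'}$, and the resulting $Y_j=S_j^{-1}$ is then a genuine \emph{injective} bounded operator, so its spectral measure puts no mass at $\{Y_j=0\}$ --- this is what prevents mass from escaping to infinity in the $j$-th coordinate. Making the passage from ``$\xi\in\overline{\operatorname{ran}(S_j)}$'' to full essential self-adjointness rigorous, in the presence of the unbounded multiplications, is the delicate point.

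Granting this, $X_1,\dots,X_{n-1}$ are pairwise commuting essentially self-adjoint operators on $D$ with a cyclic vector, so their closures admit a common spectral resolution; equivalently the bounded commuting self-adjoint family $\{Y_j,Z_j\}_{j<n}$ generates a commutative $C^*$-algebra whose spectral measure lives on the product of the circles $y_j^2+z_j^2=y_j$, and injectivity of each $Y_j$ lets one transform back by $x_j=z_j/y_j$ to a projection-valued measure on $\mathbb{R}^{n-1}$. It remains to adjoin the single symmetric operator $X_n$, which commutes with all $Y_j,Z_j$: being a real (hence equal-deficiency) symmetric operator it has self-adjoint extensions, and one chooses one commuting with the bounded family, by a fibre-wise extension over the joint spectrum of the $Y_j,Z_j$. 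This produces the full projection-valued measure $E$ on $\mathbb{R}^n$; defining $\mu=\langle E(\cdot)\xi,\xi\rangle$ and checking $L(\underline{x}^k)=\int \underline{x}^k\,d\mu$ on monomials gives $L=L_\mu$ with $\mu$ a positive Borel measure, as required.

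Finally, for determinacy, suppose (\ref{n}) holds for $j=n$ as well. Then $X_n$ is essentially self-adjoint too, so $X_1,\dots,X_n$ have a \emph{unique} common spectral resolution; equivalently $L$ now extends to a PSD functional on the full localization $\mathbb{R}[\underline{x}]_{p}$, the bounded family $\{Y_j,Z_j\}_{j=1}^n$ generates a commutative $C^*$-algebra on whose compact joint spectrum the representing measure is uniquely determined by the Riesz representation theorem, and injectivity of every $Y_j$ forces the pull-back to $\mathbb{R}^n$ to be unique. Any $\nu\sim\mu$ yields the identical functional $L$, hence the identical operators, spectral measure, and pull-back, so $\nu=\mu$; that is, $\mu$ is determinate.
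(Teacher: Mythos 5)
Your architecture (GNS construction plus joint spectral resolution) founders exactly at the step you flag as delicate, and the failure is structural, not just technical. Condition (\ref{n}) gives only that the single vector $\xi$ lies in $\overline{\operatorname{ran}(S_j|_D)}$, where $S_j=I+X_j^2$; essential self-adjointness of $X_j$ requires the ranges of $(X_j\pm i)|_D$ to be \emph{dense}. Your bootstrap --- invariance of $\operatorname{ran}(S_j)$ under multiplication operators plus cyclicity of $\xi$ --- does not close this gap: from $S_j(|q_{jk}|^2)\to\xi$ you cannot conclude $M_g S_j(|q_{jk}|^2)\to g\xi$, because $M_g$ is unbounded, and the closure of a subspace invariant under an unbounded operator need not be invariant; the contraction $S_jf\mapsto f$ controls norms in the wrong direction to repair this. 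In one variable the implication does hold, since there $\xi$ is cyclic for $X_j$ alone and the computation $\langle X_j^k\xi,\eta\rangle=i^k\langle\xi,\eta\rangle=0$ for $\eta\in\ker(X_j^*+i)$ kills the deficiency spaces; in several variables $\xi$ is cyclic only for the full algebra and that computation breaks down on mixed monomials. Note that being able to multiply the approximation by an arbitrary polynomial and retain $L^2$-convergence is essentially Schm\"udgen's $\mathcal{L}^4$-condition (\ref{s}), and the paper explicitly states it is unknown whether (\ref{n}) and (\ref{s}) imply one another --- so a proof that (\ref{n}) forces essential self-adjointness should be viewed with suspicion. The paper's own proof avoids the issue entirely: by Cauchy--Schwarz, (\ref{n}) yields $\lim_k L\bigl(g(1-(1+x_j^2)q_{jk}\overline{q_{jk}})\bigr)=0$ for every $g\in\mathbb{C}[\underline{x}]$, which is precisely the hypothesis of \cite[Corollary 4.8]{M3}, and determinacy follows from \cite[Corollary 2.7]{M3} with $p_{jk}:=(x_j+i)q_{jk}\overline{q_{jk}}$; those localization results are engineered to need only the cyclic vector in the closure of the relevant ranges, which is exactly what (\ref{n}) provides.

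Even if essential self-adjointness of $X_1,\dots,X_{n-1}$ were granted, your next step --- ``pairwise commuting essentially self-adjoint operators on $D$ with a cyclic vector, so their closures admit a common spectral resolution'' --- is false as a general principle: Nelson's example exhibits two operators commuting on a common dense invariant domain, each essentially self-adjoint, whose spectral measures do not commute. Equivalently, commutativity of your bounded operators $Y_j$ and $Y_l$ for $j\neq l$ \emph{is} strong commutativity of $\overline{X_j}$ and $\overline{X_l}$, which is the thing needing proof, not a consequence of commutativity on $D$. The same lacuna affects the adjunction of $X_n$ (producing a self-adjoint extension commuting with the others requires a genuine direct-integral/disintegration argument, not just von Neumann's equal-deficiency criterion), and the determinacy argument is circular as written: showing that all measures obtained from \emph{your} spectral measure coincide does not rule out representing measures that arise from no spectral resolution of these operators at all --- which is precisely what happens for the non--von Neumann solutions of an indeterminate one-dimensional problem. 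Your parenthetical remark that (\ref{n}) should let $L$ extend to a PSD functional on $\mathbb{R}[\underline{x}]_{p'}$ is in fact the correct idea, and is the route the paper (via \cite{M3}) actually takes; but the proposal asserts this equivalence without proof and then abandons it for the essential-self-adjointness route, which does not work.
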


\begin{proof} Extend $L$ to $\mathbb{C}[\underline{x}]$ in the obvious way, i.e., $L(f_1+if_2) := L(f_1)+iL(f_2)$. Define $\langle f,g\rangle := L(f\overline{g})$, $\| f\| := \sqrt{\langle f,f\rangle}$. According to \cite[Corollary 4.8]{M3} to prove the existence assertion it suffices to show that $\forall$ $g\in \mathbb{C}[\underline{x}]$ and $\forall$ $j= 1,\dots,n-1$, $$\lim\limits_{k\rightarrow \infty}L(g(1-(1+x_j^2)q_{jk}\overline{q_{jk}}))= 0.$$ This is immediate from condition (\ref{n}), using the Cauchy-Schwartz inequality.
According to \cite[Corollary 2.7]{M3}, to show uniqueness it suffices to show $\forall$ $j = 1,\dots,n$
$\exists$ a sequence $\{p_{jk}\}_{k=1}^{\infty}$ in $\mathbb{C}[\underline{x}]$ such that  $$\lim\limits_{k\rightarrow\infty}L(|1-(x_j-i)p_{jk}|^2)=0.$$
Uniqueness follows from this criterion, taking $p_{jk} := (x_j+i)q_{jk}\overline{q_{jk}}$.
\end{proof}

We remark that \cite[Theorem 4.9]{M3} is a consequence of Theorem \ref{new}. This is immediate from the following:

\begin{lem}\label{clarify} Suppose $L : \mathbb{R}[\underline{x}] \rightarrow \mathbb{R}$ is linear and PSD. Suppose $\{q_{jk}\}_{k=1}^{\infty}$ is a sequence of polynomials in $\mathbb{C}[\underline{x}]$. Then $$\lim\limits_{k\rightarrow \infty} L(| 1-(x_j-i)q_{jk}|^4) = 0 \ \Rightarrow \ \lim\limits_{k\rightarrow \infty}L(|1-(1+x_j^2)q_{jk}\overline{q_{jk}}|^2) =0.$$
\end{lem}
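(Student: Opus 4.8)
The plan is to reduce everything to a single algebraic identity together with two applications of the Cauchy--Schwarz inequality for the PSD form $\langle f,g\rangle := L(f\bar g)$, $\|f\| := \sqrt{\langle f,f\rangle}$. I fix $j$, suppress the index $k$, and write $q := q_{jk}$ and $u := 1 - (x_j - i)q$, so the hypothesis reads $L(|u|^4)\to 0$. The starting observation is that $(x_j-i)(x_j+i) = 1+x_j^2$ and $\overline{(x_j-i)q} = (x_j+i)\bar q$, so setting $w := (x_j - i)q$ gives $w\bar w = (1+x_j^2)q\bar q$ together with $u = 1-w$ and $\bar u = 1-\bar w$. Substituting $w = 1-u$, $\bar w = 1-\bar u$ into $1 - w\bar w$ and expanding yields the key identity
\[ 1 - (1+x_j^2)q\bar q \;=\; u + \bar u - u\bar u \;=\; u + \bar u - |u|^2, \]
whose right-hand side is manifestly real, matching the fact that the left-hand side is a real polynomial.

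With this identity in hand, the quantity I must control is $L(|1-(1+x_j^2)q\bar q|^2) = \| u + \bar u - |u|^2 \|^2$, so by the triangle inequality it suffices to show that each of $\|u\|$, $\|\bar u\|$ and $\| |u|^2 \|$ tends to $0$. The term $\| |u|^2 \| = \sqrt{L(|u|^4)}$ goes to $0$ directly by hypothesis, and since $L$ is $\mathbb{R}$-valued and the algebra is commutative one has $\|\bar u\| = \|u\|$, so the whole matter comes down to showing $\|u\| \to 0$.

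The one genuinely substantive step is to deduce $\|u\|\to 0$ from $\| |u|^2\|\to 0$, and this is where Cauchy--Schwarz does the work: writing $\|u\|^2 = L(|u|^2) = \langle |u|^2, 1\rangle$ and applying the inequality gives $\|u\|^2 \le \| |u|^2\|\,\|1\| = \sqrt{L(1)}\,\sqrt{L(|u|^4)}$, whose right-hand side tends to $0$ because $L(1)$ is a fixed finite constant. Combining the estimates, $\| u + \bar u - |u|^2 \| \le 2\|u\| + \| |u|^2 \| \to 0$, which is exactly the desired conclusion. I expect the only real obstacle to be spotting the identity above (and checking that $1 - w\bar w$ expands as $u + \bar u - |u|^2$); once that is written down the analytic estimates are entirely routine, and no hypothesis beyond the PSD-ness of $L$ and finiteness of $L(1)$ is needed.
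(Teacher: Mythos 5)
Your proof is correct and is essentially identical to the paper's own argument: the same identity $1-(1+x_j^2)q\overline{q}=u+\overline{u}-u\overline{u}$ (the paper's $Q_k+\overline{Q_k}-Q_k\overline{Q_k}$), the same Cauchy--Schwarz estimate $\|u\|^2=\langle u\overline{u},1\rangle\le\|u\overline{u}\|\,\|1\|$, and the same triangle-inequality conclusion. No differences worth noting.
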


\begin{proof}  Let $Q_k := 1-(x_j-i)q_{jk}$. Thus $$1-(1+x_j^2)q_{jk}\overline{q_{jk}} = 1-(1-Q_k)(1-\overline{Q}_k) = Q_k+\overline{Q}_k-Q_k\overline{Q_k}.$$ We are assuming $\|Q_k\overline{Q_k}\| \rightarrow 0$ as $k \rightarrow \infty$ and we want to show $\| Q_k+\overline{Q_k}-Q_k\overline{Q_k}\| \rightarrow 0$ as $k \rightarrow \infty$. Applying the Cauchy-Schwartz inequality and the triangle inequality we obtain $\| Q_k\|^2 = \| \overline{Q_k}\|^2 = \langle Q_k\overline{Q_k},1\rangle \le \|Q_k\overline{Q_k}\|\cdot \| 1 \|$ and $$\| Q_k+\overline{Q_k}-Q_k\overline{Q_k}\| \le \| Q_k\| +\| \overline{Q_k}\|+\| Q_k\overline{Q_k}\| \le 2\sqrt{\|Q_k\overline{Q_k}\|\cdot \| 1 \|}+\| Q_k\overline{Q_k}\|.$$ At this point the result is clear.
\end{proof}

The following result of Nussbaum \cite[Theorem 4.11]{N} can also be 
seen as a consequence of Theorem \ref{new}.

\begin{thm}[Nussbaum] \label{md carleman}  Suppose $L : \mathbb{R}[\underline{x}] \rightarrow \mathbb{R}$ is linear and PSD and, for $j=1,\dots,n-1$, the Carleman condition
\begin{equation}\label{v}
\sum_{k=1}^{\infty} \frac{1}{\root 2k \of{L(x_j^{2k})}} = \infty
\end{equation}
holds. Then there exists a positive Borel measure $\mu$ on $\mathbb{R}^n$ such that $L=L_{\mu}$. If condition (\ref{v}) holds also for $j=n$ then the measure is determinate.
\end{thm}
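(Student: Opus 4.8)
The plan is to derive Theorem \ref{md carleman} from Theorem \ref{new} by proving that, for each index $j$, the Carleman condition (\ref{v}) implies the density condition (\ref{n}). Once this implication is in hand, the existence assertion follows by applying it for $j=1,\dots,n-1$ and quoting Theorem \ref{new}, and determinacy follows in the same way as soon as (\ref{v}) is assumed for $j=n$ as well. Fix $j$. By Lemma \ref{clarify} it suffices to exhibit a sequence $\{q_{jk}\}$ in $\mathbb{C}[x_j]$ with $L(|1-(x_j-i)q_{jk}|^4)\to 0$; since I am free to take the $q_{jk}$ to involve $x_j$ only, every quantity that occurs depends solely on the one-variable moment data $s_m:=L(x_j^m)$.

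First I would pass to a genuinely one-dimensional problem. Because $L$ is PSD, the Hankel matrix $[s_{a+b}]_{a,b\ge 0}$ is positive semidefinite, so Hamburger's theorem furnishes a positive Borel measure $\sigma$ on $\mathbb{R}$ with $\int x^m\,d\sigma=s_m$ for all $m$; hence $L(F(x_j))=\int F\,d\sigma$ for all $F\in\mathbb{R}[x_j]$. (If $L(1)=0$ then $L=0$ and the theorem is trivial, so I may normalize $s_0=1$.) The goal $L(|1-(x_j-i)q_{jk}|^4)\to 0$ now reads $\int|1-(x-i)q_k|^4\,d\sigma\to 0$; that is, I must show the constant $1$ lies in the $L^4(\sigma)$-closure of the subspace $(x-i)\mathbb{C}[x]$.

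I would prove this density by duality together with a Denjoy--Carleman quasi-analyticity argument. Let $f\in L^{4/3}(\sigma)$ annihilate $(x-i)\mathbb{C}[x]$, i.e.\ $\int f\,(x-i)x^m\,d\sigma=0$ for every $m\ge 0$; by Hahn--Banach it is enough to force $f=0$. Put $d\nu:=f\,(x-i)\,d\sigma$, a finite complex measure all of whose power moments vanish, and consider its Fourier transform $\widehat\nu$. Differentiation under the integral is justified because $\sigma$ has all moments finite, so $\widehat\nu\in C^\infty$ with $\widehat\nu^{(m)}(0)=0$ for every $m$ and, by H\"older's inequality, $|\widehat\nu^{(m)}(t)|\le \|f\|_{4/3}\,M_m$ where $M_m:=\left(\int x^{4m}(1+x^2)^2\,d\sigma\right)^{1/4}$. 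Since $M_m^4$ is comparable to $s_{4m}$, the class $C\{M_m\}$ is quasi-analytic precisely when $\sum_m (s_{4m})^{-1/4m}=\infty$, which I would deduce from (\ref{v}). The Denjoy--Carleman theorem then gives $\widehat\nu\equiv 0$, so $\nu=0$, so $f(x-i)=0$ $\sigma$-a.e., whence $f=0$. Therefore $1\in\overline{(x-i)\mathbb{C}[x]}^{\,L^4(\sigma)}$, as required.

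The step I expect to be the crux is checking that the Carleman divergence (\ref{v}), stated for the moments $L(x_j^{2k})$, really does yield the Denjoy--Carleman condition $\sum_m M_m^{-1/m}=\infty$. Here the log-convexity of the moment sequence (from Cauchy--Schwarz, $s_{2k}^2\le s_{2k-2}s_{2k+2}$) is decisive: after the normalization $s_0=1$ it makes $(s_{2k})^{-1/2k}$ non-increasing, so that the divergence in (\ref{v}) survives both the reindexing from $2k$ to $4m$ and the harmless polynomial weight $(1+x^2)^2$ absorbed into $M_m$. This robustness of Carleman's condition is classical but requires care; everything else --- the reduction to one variable, the passage through Lemma \ref{clarify}, and the final appeal to Theorem \ref{new} --- is routine.
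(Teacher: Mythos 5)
Your proposal is correct, and its outer skeleton coincides with the paper's proof: reduce to the one-variable measure $\mu_j$ with $L_{\mu_j}=L|_{\mathbb{R}[x_j]}$ (Hamburger), produce $q_{jk}\in\mathbb{C}[x_j]$ with $L(|1-(x_j-i)q_{jk}|^4)\rightarrow 0$, then finish with Lemma \ref{clarify} and Theorem \ref{new}. Where you genuinely differ is in how the Carleman condition (\ref{v}) yields that $L^4$ statement. The paper simply quotes Berg--Christensen \cite{BC}: condition (\ref{v}) implies $\mathbb{C}[x_j]$ is dense in $\mathcal{L}^s(\mu_j)$ for \emph{every} $s\in[1,\infty)$; it then fixes $s>4$, approximates $\frac{1}{x_j-i}$ by polynomials in $\mathcal{L}^s(\mu_j)$, and converts this into $L(|1-(x_j-i)q_{jk}|^4)\rightarrow 0$ by a single application of H\"older's inequality with exponents $s/4$ and $s/(s-4)$. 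You instead prove directly, by Hahn--Banach duality and Denjoy--Carleman quasi-analyticity, that $1$ lies in the $L^4(\mu_j)$-closure of $(x-i)\mathbb{C}[x]$; in effect you inline a proof of exactly the special case of \cite{BC} that is needed. What your route buys is self-containedness (only Hamburger, Hahn--Banach and Denjoy--Carleman are used); what it costs is that the quasi-analyticity bookkeeping must be done by hand, and that is precisely where your write-up is loose.

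The loose point: $M_m^4=s_{4m}+2s_{4m+2}+s_{4m+4}$ is \emph{not} comparable to $s_{4m}$ (the ratio $s_{4m+4}/s_{4m}$ is in general unbounded); it is controlled by $s_{4m+4}$, so the divergence you actually need is $\sum_m s_{4m+4}^{-1/(4m)}=\infty$, where the exponent $1/(4m)$ is mismatched against the index $4m+4$. This does follow from (\ref{v}), by the mechanism you indicate plus one extra trick. Normalize $s_0=1$, so that $b_k:=s_{2k}^{-1/(2k)}$ is non-increasing by log-convexity; monotonicity gives $\sum_m b_{2m+2}=\infty$ from (\ref{v}). Since $s_{4m+4}^{-1/(4m)}=b_{2m+2}^{1+1/m}$, discard the indices with $b_{2m+2}<4^{-m}$ (these contribute at most $\sum_m 4^{-m}<\infty$ to $\sum_m b_{2m+2}$); on the remaining indices $b_{2m+2}^{1/m}\ge \frac14$, so $\sum_m b_{2m+2}^{1+1/m}\ge \frac14 \sum{}' b_{2m+2}=\infty$. (Monotonicity of $s_{2k}^{1/(2k)}$ also gives $M_m^4\le 4\max(1,s_{4m+4})$, which handles the trivial regime $s_{4m+4}\le 1$, where the terms $M_m^{-1/m}$ are bounded below.) With this lemma written out, your argument is complete and correct; it is longer than the paper's proof but replaces the citation of \cite{BC} by an elementary, self-contained argument.
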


\begin{proof} Argue as in \cite[Theorem 4.10]{M3}. Let $\mu_j$ be  the positive Borel measure on $\mathbb{R}$ such that $L_{\mu_j} = L|_{\mathbb{R}[x_j]}$. According to \cite[Th\'eor\`eme 3]{BC}, the Carleman condition (\ref{v}) implies that $\mathbb{C}[x_j]$ is dense in the Lebesgue space $\mathcal{L}^s(\mu_j)$ for all $s\in [1,\infty)$. Fix $s>4$. Thus $\exists$ $q_{jk} \in \mathbb{C}[x_j]$ such that $\lim\limits_{k\rightarrow\infty}\| q_{jk}-\frac{1}{x_j-i}\|_{s,\mu_i}=0$.  An easy application of H\"older's inequality (taking $p= \frac{s}{4}$, $q= \frac{s}{s-4}$) shows that
\begin{align*}
L(|1-(x_j-i)q_{jk}|^4) &= \int |q_{jk}-\frac{1}{x_j-i}|^4|x_j-i|^4d\mu_j \\ &\le \big[ \| q_{jk}-\frac{1}{x_j-i}\|_{s,\mu_j} \cdot \| x-i\|_{\frac{4s}{s-4}, \mu_j}\big]^4
\end{align*}
so $\lim\limits_{k\rightarrow\infty}L(|1-(x_j-i)q_{jk}|^4) =0$.  The result follows now, by Lemma \ref{clarify} and Theorem \ref{new}.
\end{proof}

The reader should compare Theorems \ref{new} and \ref{md carleman} with the following result of Schm\"udgen \cite[Theorem 4.11]{M3} \cite[Proposition 1]{S}, which, according to Fuglede \cite[p. 62]{F}, is an unpublished result of J.P.R. Christensen, 1981. 

\begin{thm}[Schm\"udgen] \label{strong} Suppose $L : \mathbb{R}[\underline{x}] \rightarrow \mathbb{R}$ is linear and PSD. Fix a positive Borel measure $\mu_j$ on $\mathbb{R}$ such that $L|_{\mathbb{R}[x_j]} = L_{\mu_j}$ and suppose for $j = 1,\dots,n-1$ that $\mathbb{C}[x_j]$ is dense in $\mathcal{L}^4(\mu_j)$, i.e.,
\begin{equation}\label{s} \exists \text{ a sequence } \{q_{jk}\}_{k=1}^{\infty} \text{ in } \mathbb{C}[x_j] \text{ such that } \lim\limits_{k\rightarrow\infty}\|q_{jk}-\frac{1}{x_j-i}\|_{4,\mu_j}=0.
\end{equation}
Then there exists a positive Borel measure $\mu$ on $\mathbb{R}^n$ such that $L=L_{\mu}$. If condition (\ref{s}) holds also for $j=n$ then the measure is determinate.
\end{thm}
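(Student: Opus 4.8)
The plan is to reduce Theorem~\ref{strong} to Theorem~\ref{new} (via Lemma~\ref{clarify}) by upgrading the hypothesized $\mathcal{L}^4(\mu_j)$-approximation of $\frac{1}{x_j-i}$ into the limit condition $\lim_{k\to\infty}L\big(|1-(x_j-i)q_{jk}|^4\big)=0$. The crucial point is that the approximants $q_{jk}$ furnished by condition~(\ref{s}) are univariate, lying in $\mathbb{C}[x_j]$, so that the relevant integrals collapse to integrals against the single-variable representing measure $\mu_j$. First I would observe that for any $g\in\mathbb{C}[x_j]$ one has $L(g)=L_{\mu_j}(g)=\int g\,d\mu_j$, since $L$ and $L_{\mu_j}$ agree on $\mathbb{R}[x_j]$ and hence, after complexification, on $\mathbb{C}[x_j]$.

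Second, I would rewrite the target quantity purely in terms of $\mu_j$. Because $1-(x_j-i)q_{jk}\in\mathbb{C}[x_j]$, its modulus-fourth-power is again (the complexification of) a polynomial in $x_j$, so
\begin{equation*}
L\big(|1-(x_j-i)q_{jk}|^4\big)=\int\big|1-(x_j-i)q_{jk}\big|^4\,d\mu_j=\int\Big|q_{jk}-\tfrac{1}{x_j-i}\Big|^4\,|x_j-i|^4\,d\mu_j,
\end{equation*}
where in the last step I have factored out $(x_j-i)$, using that $|x_j-i|^2=1+x_j^2$ is real and strictly positive on $\mathbb{R}$ so the manipulation is valid pointwise. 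This is exactly the identity already exploited in the proof of Theorem~\ref{md carleman}. The third step is then to control this weighted $\mathcal{L}^4$-norm. Here, however, the weight $|x_j-i|^4$ is unbounded on $\mathbb{R}$, so I cannot simply pull it out; this is the main obstacle, and it is the reason condition~(\ref{s}) demands density in $\mathcal{L}^4$ rather than merely in $\mathcal{L}^2$.

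To surmount the weight, I would apply H\"older's inequality in the form used for Theorem~\ref{md carleman}. Splitting the integrand $\big|q_{jk}-\tfrac{1}{x_j-i}\big|^4\cdot|x_j-i|^4$ with conjugate exponents, one bounds the integral by a product of an $\mathcal{L}^4(\mu_j)$-norm of the error $q_{jk}-\tfrac{1}{x_j-i}$ (raised to the fourth power) against a norm of the weight $|x_j-i|$. The only subtlety is that, to keep the weight factor \emph{finite} and \emph{fixed} (independent of $k$), the error term must be measured in a space strictly better than $\mathcal{L}^4$, so that some integrability of $|x_j-i|$ remains. Concretely, the clean route is to strengthen~(\ref{s}) to density in $\mathcal{L}^s(\mu_j)$ for some $s>4$; but the statement as given assumes only $\mathcal{L}^4$. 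I would therefore instead argue directly: since $\frac{1}{x_j-i}\in\mathcal{L}^4(\mu_j)$ forces $\int\frac{d\mu_j}{(1+x_j^2)^2}<\infty$, and finiteness of all moments of $\mu_j$ gives $\int|x_j-i|^4\,d\mu_j<\infty$, the weight $|x_j-i|^4$ is itself $\mu_j$-integrable; one can then use a truncation/dominated-convergence argument, bounding $\big|q_{jk}-\tfrac1{x_j-i}\big|^4$ on a fixed large ball by the $\mathcal{L}^4$-convergence and handling the tail uniformly using the integrability of the weight together with the uniform $\mathcal{L}^4$-boundedness of the $q_{jk}$.

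Once $\lim_{k\to\infty}L\big(|1-(x_j-i)q_{jk}|^4\big)=0$ is established for $j=1,\dots,n-1$ (and for $j=n$ under the extra hypothesis), Lemma~\ref{clarify} immediately converts this into condition~(\ref{n}), namely $\lim_{k\to\infty}L\big(|1-(1+x_j^2)q_{jk}\overline{q_{jk}}|^2\big)=0$. Theorem~\ref{new} then yields both the existence of a representing measure $\mu$ with $L=L_\mu$ and, under the $j=n$ hypothesis, its determinacy, completing the proof. I expect the existence/determinacy bookkeeping to be routine; the genuine work is entirely in the third step, in taming the unbounded weight $|x_j-i|^4$ with only $\mathcal{L}^4$-density in hand.
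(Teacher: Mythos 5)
Your overall strategy---deducing Theorem \ref{strong} from Theorem \ref{new} by showing that condition (\ref{s}) forces $\lim_{k\to\infty} L(|1-(x_j-i)q_{jk}|^4)=0$ and then invoking Lemma \ref{clarify}---runs directly against what the paper itself says about these two theorems. The paper gives no proof of Theorem \ref{strong}: it is quoted as a known result of Schm\"udgen/Christensen, with proofs in \cite{S} and \cite{M3}. More to the point, immediately after the statement the author remarks that it is not clear, to the author at least, how Theorems \ref{new} and \ref{strong} are related, and in particular that it is not clear that either result implies the other. Your argument, if it worked, would prove exactly such an implication (condition (\ref{s}) $\Rightarrow$ condition (\ref{n}), via Lemma \ref{clarify}), settling a question the author explicitly leaves open. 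That alone should make you scrutinize the one step you yourself flag as ``the genuine work,'' and that step is indeed where the argument breaks.

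The gap is the tail estimate in your third step. Writing $f_k := q_{jk}-\frac{1}{x_j-i}$ and $w := (1+x_j^2)^2 = |x_j-i|^4$, you must show $\int |f_k|^4 w\, d\mu_j \to 0$ knowing only $\int |f_k|^4 d\mu_j \to 0$ and $\int w\, d\mu_j < \infty$. Truncation handles $\{|x_j|\le R\}$, where $w$ is bounded; but on the tail $\{|x_j|>R\}$ the weight is unbounded, and uniform boundedness of $\|f_k\|_{4,\mu_j}$ together with integrability of $w$ gives no uniform-in-$k$ control of $\int_{|x_j|>R}|f_k|^4 w\, d\mu_j$: the $q_{jk}$ are polynomials of growing degree, and nothing prevents $|f_k|^4$ from concentrating its (small) $\mu_j$-mass exactly where $w$ is enormous. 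What your truncation argument actually requires is uniform integrability of the family $\{|f_k|^4 w\}$, which one would get from boundedness in $\mathcal{L}^s(\mu_j)$ for some $s>4$ via H\"older---precisely the strengthening you concede you do not have, and precisely what the Carleman condition supplies (through the Berg--Christensen density in every $\mathcal{L}^s$) in the paper's proof of Theorem \ref{md carleman}. Worse, the $\mathcal{L}^2$-analogue of the implication you are attempting is provably false: for an N-extremal (von Neumann) solution $\mu$ of an indeterminate one-dimensional moment problem, the polynomials are dense in $\mathcal{L}^2(\mu)$, yet no polynomial sequence $q_k$ can satisfy $\int |q_k - \frac{1}{x-i}|^2(1+x^2)\,d\mu \to 0$, since by \cite[Corollary 2.7]{M3} (the uniqueness criterion quoted in the proof of Theorem \ref{new}) this would force $\mu$ to be determinate. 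So unweighted $\mathcal{L}^p$-density simply does not imply the weighted approximation you need, and no soft truncation or dominated-convergence argument can bridge that gap.
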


By considering products of measures of the sort considered by Sodin in \cite{So}, one sees that Theorem \ref{new} and Theorem \ref{strong} are both strictly stronger than Nussbaum's result. But it is not clear, to the author at least, how Theorems \ref{new} and 
\ref{strong} are related. In particular, it is not clear that either result implies the other.

We turn now to the problem of describing the support of $\mu$. By definition, the support of $\mu$ is the smallest closed set $K$ of $\mathbb{R}^n$ satisfying $\mu(\mathbb{R}^n \backslash K) = 0$. We recall additional notation from \cite{M1} and \cite{M3}. If $M$ is a quadratic module of an $\mathbb{R}$-algebra $A$, define $$X_M := \{ \alpha : A \rightarrow \mathbb{R} \mid \alpha \text{ is an } \mathbb{R}\text{-algebra homomorphism, } \alpha(M) \subseteq [0,\infty)\}.$$ If $M = \sum A^2+I$, where $I$ is an ideal of $A$, the condition $\alpha(M) \subseteq [0,\infty)$ is equivalent to the condition $\alpha(I) = \{ 0 \}$. $\mathbb{R}[\underline{x}]_p$ denotes the localization of $\mathbb{R}[\underline{x}]$ at $p$ where $p := \prod_{j=1}^n (1+x_j^2)$. If $A$ is $\mathbb{R}[\underline{x}]$ or $\mathbb{R}[\underline{x}]_p$ then algebra homomorphisms $\alpha : A \rightarrow \mathbb{R}$ are identified with points of $\mathbb{R}^n$ via the map $\alpha \mapsto (\alpha(x_1),\dots,\alpha(x_n))$ and  $X_M$ is identified  with the set $\{ \underline{a} \in \mathbb{R}^n \mid g(\underline{a})\ge 0 \ \forall \ g\in M\}$.

\begin{thm} \label{support} Suppose $L : \mathbb{R}[\underline{x}] \rightarrow \mathbb{R}$ is a PSD linear map satisfying (\ref{n}) for $j=1,\dots,n$ and $g\in \mathbb{R}[\underline{x}]$ is such that $L(gh^2)\ge 0$ $\forall$ $h\in \mathbb{R}[\underline{x}]$. Then the support of the associated positive Borel measure $\mu$ is contained in the set $\{ \underline{a} \in \mathbb{R}^n \mid g(\underline{a})\ge 0\}$.
\end{thm}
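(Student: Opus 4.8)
The plan is to deduce the measure from Theorem \ref{new}, reduce the support assertion to an almost-everywhere positivity statement, and then prove the latter by a weighted $\mathcal{L}^2$-approximation argument whose only nontrivial input is a density statement supplied by condition (\ref{n}) via the localization at $p$.

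First, since $L$ satisfies (\ref{n}) for all $j=1,\dots,n$, Theorem \ref{new} provides a determinate positive Borel measure $\mu$ with $L = L_\mu$, and the hypothesis $L(gh^2)\ge 0$ for all $h\in\mathbb{R}[\underline{x}]$ becomes $\int g h^2\,d\mu \ge 0$ for every (real) polynomial $h$; each such integral is finite because $\mu$ has all finite moments. As the support of $\mu$ is the smallest closed set of full measure and $\{\underline{a}: g(\underline{a})\ge 0\}$ is closed, it suffices to show $\mu(\{g<0\})=0$, i.e. $g\ge 0$ $\mu$-almost everywhere.

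Suppose not. By continuity of $g$ and inner regularity of $\mu$, the open set $\{g<0\}$ contains a compact $K$ with $\mu(K)>0$ on which $g\le -\delta$ for some $\delta>0$. Choose $m$ so large that $|g|\le C p^m$ on $\mathbb{R}^n$ for some constant $C$; this is possible because $p\ge 1+\sum_{i} x_i^2$, so a high power of $p$ dominates the polynomial $|g|$. Put $w:=Cp^m$, a positive polynomial weight, and note $\mathbf{1}_K\in\mathcal{L}^2(w\,d\mu)$ since $K$ is compact. Assuming real polynomials are dense in $\mathcal{L}^2(w\,d\mu)$, pick $h_k\to\mathbf{1}_K$ in that space. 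By the Cauchy-Schwartz inequality,
\[
\|h_k^2-\mathbf{1}_K\|_{\mathcal{L}^1(w\,d\mu)} = \|h_k^2-\mathbf{1}_K^2\|_{\mathcal{L}^1(w\,d\mu)} \le \|h_k-\mathbf{1}_K\|_{\mathcal{L}^2(w\,d\mu)}\,\|h_k+\mathbf{1}_K\|_{\mathcal{L}^2(w\,d\mu)} \longrightarrow 0,
\]
so, using $|g|\le w$, we get $\int g h_k^2\,d\mu \to \int_K g\,d\mu \le -\delta\,\mu(K)<0$, contradicting $\int g h_k^2\,d\mu = L(gh_k^2)\ge 0$. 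Hence $\mu(\{g<0\})=0$.

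The whole argument thus rests on the density of $\mathbb{R}[\underline{x}]$ in $\mathcal{L}^2(p^m\,d\mu)$ for every $m$, and establishing this is the main obstacle; it is precisely here that (\ref{n}) and the localization at $p=\prod_j(1+x_j^2)$ enter. In the Hilbert space $\overline{\mathbb{C}[\underline{x}]}\subseteq\mathcal{L}^2(\mu)$, condition (\ref{n}) says that for each $j$ the sums of squares $(1+x_j^2)q_{jk}\overline{q_{jk}}$ tend to $1$, i.e. the inverse $\tfrac{1}{1+x_j^2}$ of each localizing factor is approximable by polynomials in the relevant weighted norm. The plan is to exploit this one variable at a time and then across the product $p$, through the localization machinery of \cite{M1} and \cite{M3}, upgrading the per-variable approximations to joint density in the $p$-weighted spaces. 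The delicate point is the passage from the single-variable $\mathcal{L}^2$ statements to a joint one for products, which is why one needs a Cauchy-Schwartz/diagonal argument together with the availability of sufficiently many finite moments — the same phenomenon that forces the appearance of $\mathcal{L}^4$ in Theorems \ref{md carleman} and \ref{strong}. Once this density is secured, the contradiction above finishes the proof.
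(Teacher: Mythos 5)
Your reduction of the theorem to the statement ``$g\ge 0$ $\mu$-almost everywhere,'' and the contradiction argument via a compact $K\subseteq\{g<0\}$, a dominating weight $w=Cp^m\ge|g|$, and approximation of $\mathbf{1}_K$ in $\mathcal{L}^2(w\,d\mu)$, are correct as far as they go. But the entire proof hangs on the density of $\mathbb{R}[\underline{x}]$ in $\mathcal{L}^2(p^m\,d\mu)$, and this you do not prove: you yourself label it ``the main obstacle'' and offer only a plan. That is a genuine gap, and not a routine one. Condition (\ref{n}) supplies only $\mathcal{L}^2(\mu)$-information: $(1+x_j^2)q_{jk}\overline{q_{jk}}\rightarrow 1$ in $\mathcal{L}^2(\mu)$, equivalently $\frac{1}{1+x_j^2}$ lies in the $\mathcal{L}^2((1+x_j^2)^2d\mu)$-closure of (hermitian squares of) polynomials. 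To get from this to density of polynomials in $\mathcal{L}^2(p^m\,d\mu)$ for the required power $m$, you would have to multiply such approximations across the variables and across powers, and products of $\mathcal{L}^2$-convergent sequences need not converge in $\mathcal{L}^2$ --- exactly the difficulty you acknowledge, and one that (\ref{n}) gives no means to overcome, since it provides no $\mathcal{L}^4$ or pointwise control whatsoever. Weighted polynomial density of this strength is a ``strong determinacy'' type property, which already in one variable is known to be strictly stronger than determinacy; so there is real reason to doubt that your intermediate claim even follows from (\ref{n}).

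The paper's proof shows how to sidestep density altogether, and the device it uses is precisely what your plan is missing. It extends $L$ to $\mathbb{R}[\underline{x}]_p$ by $L(f):=\int f\,d\mu$ and proves directly, by induction on the number of factors $x_j\pm i$ in the denominator of $h$, that $L(gh\overline{h})\ge 0$ for every $h\in\mathbb{C}[\underline{x}]_p$: writing $(1+x_j^2)h\overline{h}q_{jk}\overline{q_{jk}}=h'\overline{h'}$ with $h'=(x_j-i)hq_{jk}$, which has one fewer denominator factor, the induction hypothesis gives $L(g(1+x_j^2)h\overline{h}q_{jk}\overline{q_{jk}})\ge 0$, and a single application of Cauchy-Schwartz together with (\ref{n}) gives $L(gh\overline{h}(1-(1+x_j^2)q_{jk}\overline{q_{jk}}))\rightarrow 0$, whence $L(gh\overline{h})\ge 0$ in the limit. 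Thus $L$ is nonnegative on the quadratic module $Q$ of $\mathbb{R}[\underline{x}]_p$ generated by $g$, and \cite[Corollary 3.4]{M1} --- which needs no density hypothesis --- produces a measure $\nu$ on $X_Q=\{\underline{a}\mid g(\underline{a})\ge 0\}$ representing $L$ on $\mathbb{R}[\underline{x}]_p$; determinacy of $\mu$ (Theorem \ref{new}, using (\ref{n}) for all $j$) forces $\nu=\mu$. Note how the induction keeps each approximation inside an expression of the form $L(g\,(\cdot)\,\overline{(\cdot)})$, so that only one factor is ever approximated at a time. To salvage your approach you would need either to prove your density claim (doubtful from (\ref{n}) alone) or to restructure the approximation in this same one-factor-at-a-time fashion, at which point you have essentially reconstructed the paper's argument.
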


See \cite[Theorem 2.2]{L} for an earlier version of this result.

\begin{proof} Denote by $L : \mathbb{R}[\underline{x}]_p \rightarrow \mathbb{R}$ the PSD linear extension of $L$ defined by $L(f) := \int f d\mu$ $\forall$ $f\in \mathbb{R}[\underline{x}]_p$. We claim that $L(gh\overline{h})\ge 0$ $\forall$ $h \in \mathbb{C}[\underline{x}]_p$ (so, in particular, $L(gh^2)\ge 0$ $\forall$ $h\in \mathbb{R}[\underline{x}]_p$). The proof is by induction of the number of factors of the form $x_j\pm i$, $j=1,\dots,n$ appearing in the denominator of $h$. Suppose $x_j\pm i$ appears in the denominator of $h$. Note that $(x_j \pm i)hq_{jk}$ has fewer factors $x_j\pm i$ appearing in the denominator, so, by induction, $L(g(1+x_j^2)h\overline{h}q_{jk}\overline{q_{jk}})\ge 0$. Applying the Cauchy-Schwartz inequality, we see that $L(gh\overline{h}(1-(1+x_j^2)q_{jk}\overline{q_{jk}})) \rightarrow 0$ as $k\rightarrow \infty$. It follows that $L(g(1+x_j^2)h\overline{h}q_{jk}\overline{q_{jk}}) \rightarrow L(gh\overline{h})$ as $k \rightarrow \infty$, so $L(gh\overline{h})\ge 0$. This proves the claim. Denote by $Q$ the quadratic module of $\mathbb{R}[\underline{x}]_p$ generated by $g$, i.e., $Q := \sum \mathbb{R}[\underline{x}]_p^2+\sum \mathbb{R}[\underline{x}]_p^2g$. It follows from the claim together with the fact that $L$ is PSD on $\mathbb{R}[\underline{x}]_p$ that $L(Q)\subseteq [0,\infty)$. By \cite[Corollary 3.4]{M1} there exists a positive Borel measure $\nu$ on $X_Q = \{ \underline{a} \in \mathbb{R}^n \mid g(\underline{a})\ge 0\}$ such that $L(f) = \int f d\nu$ $\forall$ $f \in \mathbb{R}[\underline{x}]_p$. Uniqueness of $\mu$ implies $\mu = \nu$.
\end{proof}

\begin{cor}\label{cor} If $L$ satisfies condition (\ref{n}) for $j=1,\dots,n$ and $L(M)\subseteq [0,\infty)$ for some quadratic module $M$ of $\mathbb{R}[\underline{x}]$ then the support of the associated positive Borel measure $\mu$ is contained in the set $X_M = \{ \underline{a} \in \mathbb{R}^n \mid g(\underline{a})\ge 0 \ \forall \ g \in M\}$.
\end{cor}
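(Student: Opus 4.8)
The plan is to deduce Corollary~\ref{cor} from Theorem~\ref{support} by reducing the general quadratic module $M$ to the single-generator situation already handled. First I would observe that, by definition, $X_M = \bigcap_{g \in M} \{ \underline{a} \in \mathbb{R}^n \mid g(\underline{a}) \ge 0 \}$, so it suffices to show that for each fixed $g \in M$ the support of $\mu$ is contained in the half-space-like set $\{ \underline{a} \mid g(\underline{a}) \ge 0 \}$; intersecting these closed sets over all $g \in M$ then confines the support to $X_M$.

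So the key step is to verify, for an arbitrary but fixed $g \in M$, that the hypotheses of Theorem~\ref{support} are met. The linear map $L$ is PSD and satisfies (\ref{n}) for $j = 1,\dots,n$ by assumption, so the only thing to check is that $L(g h^2) \ge 0$ for all $h \in \mathbb{R}[\underline{x}]$. This is immediate from the quadratic-module axioms: since $h^2 \in \sum \mathbb{R}[\underline{x}]^2$ and $g \in M$, the defining property $f^2 M \subseteq M$ gives $g h^2 \in M$, whence $L(g h^2) \ge 0$ because $L(M) \subseteq [0,\infty)$.

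With this verified, Theorem~\ref{support} applies to each $g \in M$ and yields $\operatorname{supp}(\mu) \subseteq \{ \underline{a} \mid g(\underline{a}) \ge 0 \}$. Taking the intersection over all $g \in M$ gives $\operatorname{supp}(\mu) \subseteq \bigcap_{g \in M}\{ \underline{a} \mid g(\underline{a}) \ge 0 \} = X_M$, which is the desired conclusion.

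I do not anticipate any real obstacle here; the corollary is essentially a packaging of Theorem~\ref{support} together with the elementary closure property of quadratic modules. The one point deserving a word of care is that the support is closed and each set $\{ \underline{a} \mid g(\underline{a}) \ge 0 \}$ is closed, so the containment survives the (possibly infinite) intersection without any measure-theoretic subtlety; the measure $\mu$ is the same for every $g$ because it is determined by $L$ via the existence-and-uniqueness assertion of Theorem~\ref{new}.
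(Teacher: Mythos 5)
Your proposal is correct and is exactly the argument the paper intends: the paper states Corollary \ref{cor} without proof precisely because it follows from Theorem \ref{support} applied to each fixed $g\in M$ (where $L(gh^2)\ge 0$ comes from $h^2g\in M$ and $L(M)\subseteq[0,\infty)$, and this same hypothesis also forces $L$ to be PSD since $\sum\mathbb{R}[\underline{x}]^2\subseteq M$), followed by intersecting over all $g\in M$ to land in $X_M$. This is what you did, so there is nothing to add.
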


\begin{rem} \

(1) The quadratic module $M$ is not required to  be finitely generated, although this seems to be the most interesting case.

(2) For a quadratic module of the form $M = \sum \mathbb{R}[\underline{x}]^2+I$, $I$ an ideal of $\mathbb{R}[\underline{x}]$, one can weaken the hypothesis. It is no longer necessary to assume that $L$ satisfies condition (\ref{n}) for $j=1,\dots,n$ but only that $L=L_{\mu}$. This is more or less clear. By the Cauchy-Schwartz inequality, for $g\in \mathbb{R}[\underline{x}]$, $$L(gh)=0 \ \forall \ h\in \mathbb{R}[\underline{x}] \ \Leftrightarrow \ L(g^2)=0 \ \Leftrightarrow \ L(gh)=0 \ \forall \ h\in \mathbb{R}[\underline{x}]_p.$$ Also, in this case, $X_M= Z(I)= \{ \underline{a} \in \mathbb{R}^n \mid g(\underline{a})=0 \ \forall \ g \in I\}$.
\end{rem}


\begin{thebibliography}{99}
\bibitem{BC} C. Berg, J.P.R. Christensen, Exposants critiques dans le probl\'eme des moments. \textit{C.R. Acad. Sc. Paris S\'erie I} {\bf 296}, 661--663 (1983).
\bibitem{F} B. Fuglede, The multidimensional moment problem. \textit{Expositiones Mathematicae} {\bf1}, 47--65 (1983).
\bibitem{L} J.B. Lasserre, The K-moment problem for continuous functionals. \textit{Trans. Amer. Math. Soc.} {\bf 365}, 2489--2504 (2013).
\bibitem{M1} M. Marshall, Approximating positive polynomials using sums of squares, \textit{Canad. Math. Bull.} {\bf 46}, 400--418 (2003).
\bibitem{M3} M. Marshall, Application of localization to the multivariate moment problem. \textit{Math. Scand.}, to appear.
\bibitem{N} A.E. Nussbaum, Quasi-analytic vectors, \textit{Ark. Math.} {\bf 6} (10), 179--191 (1965).
\bibitem{S} K. Schm\"udgen, On determinacy notions for the two dimensional moment problem, \textit{Ark. Math.} {\bf 29}, 277--284 (1991).
\bibitem{So} M. Sodin, A note on the Hall-Mergelyan theme. \textit{Mat. Fiz. Anal. Geom.} {\bf 3} no. 1-2, 164--168 (1996).
\end{thebibliography}
\end{document}